\theoremstyle{plain}
\newtheorem{theorem}{Theorem}[section]
\newtheorem{proposition}[theorem]{Proposition}
\newtheorem{corollary}[theorem]{Corollary}
\theoremstyle{definition}
\newtheorem{definition}[theorem]{Definition}
\newtheorem{notation}[theorem]{Notation}
\newtheorem{example}[theorem]{Example}
\newcommand{\calg}[0]{\mathbf{C}}
\newcommand{\clos}[0]{\mathrm{Clos}}
\newcommand{\boolc}[0]{\mathbf{Bool}}
\newcommand{\agheyc}[0]{\mathbf{augStone}}
\newcommand{\obj}[0]{\mathbf{Obj}}
\begin{document}


\title{An Adjunction Between Boolean Algebras and a Subcategory of Stone Algebras
}

\author{Inigo Incer}
\date{
Computing and Mathematical Sciences\\
California Institute of Technology\\
inigo@caltech.edu
}


\maketitle

\abstract{
We consider Stone algebras with a distinguished element $e$ satisfying the identity $e \to x = \neg \neg x$ for all elements $x$ of the algebra.
We provide an adjunction between the category of such algebras and that of Boolean algebras.
}



\section{Introduction}\label{sec:intro}

The algebra of contracts \cite{BenvenisteContractBook,Incer:EECS-2022-99} has been an object of attention in computer science and engineering due to its capability to support compositional design. Given a Boolean algebra $B$, its contract algebra $\calg(B)$ has elements $(a, g) \in B^2$ such that $a \lor g = 1_B$.
$\calg(B)$ is a Stone algebra with operations
$(a,g) \land (a',g') = (a \lor a', g \land g')$,
$(a,g) \lor (a',g') = (a \land a', g \lor g')$, and
$(a',g') \to (a,g) = ((a \land \neg a') \lor (g' \land \neg g), \neg g' \lor g)$. The top and bottom elements of this algebra are, respectively, $1 = (0_B, 1_B)$ and $0 = (1_B, 0_B)$.
This note introduces a subcategory of Stone algebras with the property that the contract functor is the left adjoint of the functor that maps these algebras to their closures.

\section{Augmented Stone Algebras}

First we recall the definition of Stone algebras. Then we introduce the concept of an augmented Stone algebra.

\begin{definition}
A Stone algebra $S$ is a Heyting algebra satisfying $\neg x \lor \clos(x) = 1_S$ for all $x \in S$, where
$\neg x = x \to 0$ and $\clos(x)$ is the closure of $x$: $\clos(x) = \neg \neg x$.
We say that $x$ is \emph{closed} if $\clos(x) = x$. We say $x$ is \emph{dense} if $\clos(x) = 1_S$.
\end{definition}

\begin{definition}
We say $(S,\land,\lor,\to, 1, 0, e)$ is an \emph{augmented Stone algebra} if
$(S,\land,\lor,\to, 1, 0)$ is a Stone algebra and
$e \in S$, called the closure element, satisfies $e \to x = \clos(x)$ for all $x \in S$.
\end{definition}

\begin{notation}
Let $\boolc$ be the category of Boolean algebras, and $\agheyc$ that of augmented Stone algebras. We define $\clos\colon \agheyc \to \boolc$ as the functor that maps an extended Stone algebra to its Boolean algebra of closed elements.
\end{notation}

\begin{example}\label{kqxgk}
Any Boolean algebra $B$ is an augmented Stone algebra with $e_B = 1_B$.
$\calg(B)$ is an augmented Stone algebra with
$e = (1_B, 1_B)$.
\end{example}

Let us prove some elementary facts about augmented Stone algebras.

\begin{proposition}
    Let $S$ be an augmented Stone algebra.
    \begin{enumerate}[\normalfont(i)]
        \item $\clos(e) = 1$.
        \item The closure element is unique.
        \item $y \to e$ is dense for all $y \in S$.
        \item If $x \in S$ is closed, $x \to y = \neg x \lor y$.
    \end{enumerate}
\end{proposition}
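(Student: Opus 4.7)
The plan is to treat the four items in order, since each is short and (ii)--(iv) all lean on the identity $e \to x = \clos(x)$ together with standard Heyting-algebra manipulations.

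For (i), I would simply substitute $x = e$ into the defining identity for the closure element, giving $\clos(e) = e \to e$, and then invoke the Heyting-algebra tautology $a \to a = 1$.

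For (ii), suppose $e'$ also satisfies $e' \to x = \clos(x)$ for all $x$. Applying this at $x = e$ and using (i) gives $e' \to e = \clos(e) = 1$, hence $e' \leq e$. Swapping the roles of $e$ and $e'$ yields the reverse inequality, so $e = e'$.

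For (iii), the key move is to compute $\clos(y \to e)$ via the defining identity: $\clos(y \to e) = e \to (y \to e)$. By currying in a Heyting algebra, this equals $(e \land y) \to e$, which is $1$ since $e \land y \leq e$. Thus $y \to e$ is dense.

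For (iv), one inequality is formal: $\neg x \leq x \to y$ (because $x \land \neg x = 0 \leq y$) and $y \leq x \to y$, so $\neg x \lor y \leq x \to y$ in any Heyting algebra. The converse is where the hypothesis on $x$ enters, and this is the step I expect to be the main (small) obstacle. Since $S$ is a Stone algebra we have $\neg x \lor \clos(x) = 1$, and by the assumption that $x$ is closed this reduces to $\neg x \lor x = 1$. Using distributivity of the underlying lattice,
\[
x \to y = (x \to y) \land (\neg x \lor x) = \bigl((x \to y) \land \neg x\bigr) \lor \bigl((x \to y) \land x\bigr) \leq \neg x \lor y,
\]
where in the last step I use $(x \to y) \land x \leq y$ (modus ponens in a Heyting algebra). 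Combining the two inequalities finishes (iv).
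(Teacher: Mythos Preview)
Your proof is correct. Parts (i)--(iii) match the paper's argument essentially verbatim (the paper writes $e \to (y \to e) = y \to (e \to e)$ in (iii), which is the same currying move you make).

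For (iv) you take a slightly different route from the paper. You argue directly: once $x \lor \neg x = 1$ is in hand, you meet both sides with $x \to y$, distribute, and use modus ponens to bound by $\neg x \lor y$; the reverse inequality you note is automatic. The paper instead runs a Yoneda-style argument, showing that $a \le x \to y$ is equivalent to $a \le \neg x \lor y$ for every $a$, via the chain $a \land x \le y \Leftrightarrow (a \land x) \lor \neg x \le y \lor \neg x \Leftrightarrow a \lor \neg x \le y \lor \neg x$ (the last step again using $x \lor \neg x = 1$). Both arguments hinge on the same key input and are of comparable length; yours is arguably more transparent since it avoids checking that each biconditional in the chain genuinely goes both ways.
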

\begin{proof}
    \begin{enumerate}[\normalfont(i)]
        \item $1 = e \to e = \clos(e)$.
        \item Suppose $e$ and $e'$ are closure elements. Then $e \to e' = \clos(e') = 1$, whence $e \le e'$. By the same reasoning, $e' \le e$, and so $e = e'$.
        \item $\clos(y \to e) = e \to (y \to e) = y \to (e \to e) = 1$.
        \item For $a \in S$, we have
    $a \le x \to y \Leftrightarrow a \land x \le y \Leftrightarrow
    (a \land x) \lor \neg x \le y \lor \neg x \Leftrightarrow
    (a \land x) \lor \neg x \lor (\neg x \land a) \le y \lor \neg x \Leftrightarrow
    a \le y \lor \neg x
    $.\qedhere
    \end{enumerate}
\end{proof}

\begin{proposition}
$\calg$ is a functor $\calg\colon \boolc \to \agheyc$.
\end{proposition}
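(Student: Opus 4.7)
The plan is to define $\calg$ on morphisms componentwise and then check (i) well-definedness of the assignment, (ii) preservation of all augmented Stone operations, and (iii) the two functor axioms. Since Example~\ref{kqxgk} already asserts that each $\calg(B)$ is augmented Stone, the only nontrivial object-level content is the verification that $(1_B,1_B)$ genuinely acts as a closure element, which I would handle first as a lemma by computing both $\neg\neg(a,g)$ and $(1_B,1_B)\to(a,g)$ directly from the given implication formula; using the constraint $a\lor g = 1_B$ (which forces $\neg a \le g$, hence $\neg a \lor g = g$), both expressions collapse to $(\neg g, g)$.

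For morphisms, given a Boolean homomorphism $f\colon B \to B'$, I would set
\[
    \calg(f)(a,g) = (f(a), f(g)).
\]
Well-definedness is immediate since $f(a) \lor f(g) = f(a \lor g) = f(1_B) = 1_{B'}$, so the image lies in $\calg(B')$. Next I would verify that $\calg(f)$ preserves $\land$, $\lor$, $\to$, $0$, $1$, and $e$. Each check is a one-line calculation: the operations on $\calg(B)$ are given by polynomials in the Boolean operations of $B$ applied coordinatewise, and $f$ is a Boolean homomorphism, so in particular $f(\neg b) = \neg f(b)$. For instance,
\[
    \calg(f)\bigl((a',g')\to(a,g)\bigr)
    = \bigl(f((a\land \neg a')\lor(g'\land \neg g)),\, f(\neg g'\lor g)\bigr),
\]
which equals $\calg(f)(a',g') \to \calg(f)(a,g)$ by distributing $f$. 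The images of $(0_B,1_B)$, $(1_B,0_B)$, and $(1_B,1_B)$ are the corresponding elements of $\calg(B')$, so the top, bottom, and closure element are preserved.

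Finally, I would dispatch functoriality: $\calg(\id_B)(a,g) = (\id_B(a),\id_B(g)) = (a,g)$, and for $f\colon B\to B'$, $h\colon B'\to B''$,
\[
    \calg(h\circ f)(a,g) = (h(f(a)), h(f(g))) = \calg(h)\bigl(\calg(f)(a,g)\bigr).
\]
I do not expect a real obstacle here: the hardest part is the object-level check that $e = (1_B,1_B)$ satisfies $e\to x = \clos(x)$, and that reduces to two formula evaluations using $a\lor g = 1_B$. Everything else is componentwise bookkeeping.
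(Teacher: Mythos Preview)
Your proposal is correct and follows essentially the same route as the paper: define $\calg(f)$ componentwise, check that the image lands in $\calg(B')$ via $f(a)\lor f(g)=1_{B'}$, and verify preservation of $0$, $1$, $e$, $\land$, $\lor$, $\to$ by distributing $f$ through the Boolean polynomials. You are slightly more explicit than the paper in two places---you spell out the verification that $(1_B,1_B)$ is a closure element (the paper simply cites Example~\ref{kqxgk}) and you record the identity and composition axioms for functoriality (the paper omits these as trivial)---but these are additions of detail, not differences of strategy.
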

\begin{proof}
Given $B \in \obj(\boolc)$, we know that $\calg(B) \in \obj(\agheyc)$
from Section \ref{sec:intro} and Example \ref{kqxgk}. Given a map $f \in \hom_{\boolc}(B, B')$, we have $\calg(f)(a,g) = (fa, fg) \in \calg(B')$ as $f(a) \lor f(g) = f(a \lor g) = 1_{B'}$. Moreover, $\calg(f)(1) = 1$, $\calg(f)(0) = 0$, and $\calg(f)(e) = e$.
We also have
\begin{align*}
\calg(f)((a,g) \land (a', g')) & =
\calg(f)((a\lor a',g\land g')) =
((f(a)\lor f(a'),f(g)\land f(g'))) \\ &=
\calg(f)(a,g) \land \calg(f)(a', g'), \\
\calg(f)((a,g) \lor (a', g')) &=
\calg(f)((a\land a',g\lor g')) =
((f(a)\land f(a'),f(g)\lor f(g'))) \\ &=
\calg(f)(a,g) \lor \calg(f)(a', g'),
\intertext{and}
\calg(f)((a',g') \to (a, g)) &=
\calg(f)(((g'\to g) \to (a\land \neg a'),g\to g')) \\ &=
(((fg' \to fg) \to (fa \land \neg fa'),fg \to fg')) \\ &=
\calg(f)(a',g') \to \calg(f)(a, g).
\qedhere
\end{align*}
\end{proof}

\section{An adjunction between $\boolc$ and $\agheyc$}

Let $B \in \obj(\boolc)$ and $S \in \obj(\agheyc)$. For $(a, g) \in \calg(B)$, we let $\pi_1 (a, g) = a$ and $\pi_2 (a, g) = g$, and for $x \in B$, we let $\Delta x = (\neg x, x)$.

\begin{proposition}\label{kqhgs}
Let $f \in \hom_{\boolc} (B, \clos(S))$. The assignment
$$
\alpha_{B, S}\colon f \mapsto f \pi_2 \land (f\pi _1 \to e_S)
$$
is a set morphism $\alpha_{B, S} \colon \hom_{\boolc} (B, \clos(S)) \to \hom_{\agheyc} (\calg(B), S)$.
\end{proposition}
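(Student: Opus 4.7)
The goal is to verify that $h := \alpha_{B,S}(f)$, defined on $(a,g) \in \calg(B)$ by
$$
h(a,g) = f(g) \land (f(a) \to e_S),
$$
is a morphism of augmented Stone algebras, that is, preserves $0$, $1$, the closure element $e$, and the operations $\land$, $\lor$, $\to$. My first move will be to rewrite $h$ in a form better suited to calculation. Because $f$ factors through $\clos(S)$, both $f(a)$ and $f(g)$ are closed, so part (iv) of the preceding proposition yields $f(a) \to e_S = \neg f(a) \lor e_S$. Because $a \lor g = 1_B$ in $\calg(B)$, we have $f(a) \lor f(g) = 1_S$, whence $\neg f(a) \le f(g)$. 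Distributing and using this inequality, I obtain the normal form
$$
h(a,g) = \neg f(a) \lor \bigl(f(g) \land e_S\bigr),
$$
which underlies all subsequent computations.

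Preservation of the constants $(0_B,1_B)$, $(1_B,0_B)$, and $(1_B,1_B)$ is immediate from $f(0_B) = 0_S$ and $f(1_B) = 1_S$. Preservation of $\lor$ (which acts as $(a \land a',\, g \lor g')$ on $\calg(B)$) is a short distributive expansion. Preservation of $\land$ (acting as $(a \lor a',\, g \land g')$) is slightly more involved: expanding the product of two normal forms produces two cross-terms $\neg f(a) \land f(g') \land e_S$ and $\neg f(a') \land f(g) \land e_S$, both of which are absorbed into $f(g) \land f(g') \land e_S$ via $\neg f(a) \le f(g)$ (and symmetrically), while the remaining piece $\neg f(a) \land \neg f(a')$ matches the image of $f(a \lor a')$ through the Stone De Morgan identity $\neg(x \land y) = \neg x \lor \neg y$.

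The main obstacle is preservation of $\to$. Applying the normal form to $(a',g') \to (a,g) = ((a \land \neg a') \lor (g' \land \neg g),\, \neg g' \lor g)$, using the Stone De Morgan law, and invoking $\neg\neg = \id$ on closed elements, I expect to obtain
$$
h\bigl((a',g') \to (a,g)\bigr) = P \land (Q \lor e_S),
$$
where $P := \neg f(g') \lor f(g)$ and $Q := \neg f(a) \lor f(a')$. For the other side I will decompose $h(a',g') \to h(a,g)$ via the Heyting identities $(x \lor y) \to z = (x \to z) \land (y \to z)$ and $(x \land y) \to z = x \to (y \to z)$, applying part (iv) to the closed element $\neg f(a')$ and collapsing $e_S \to h(a,g)$ to $\clos(h(a,g))$. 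Using that $\clos$ preserves binary joins in a Stone algebra together with $\neg f(a) \le f(g)$, this closure simplifies to $f(g)$, leaving $h(a',g') \to h(a,g) = P \land \bigl(Q \lor (f(g) \land e_S)\bigr)$. The two expressions agree because the only discrepancy $P \land \neg f(g') \land e_S$ is absorbed into $P \land Q$: we have $\neg f(g') \le f(a') \le Q$ and $\neg f(g') \le P$, so $\neg f(g') \land e_S \le P \land Q$.
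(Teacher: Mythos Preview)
Your argument is correct, and it follows a genuinely different route from the paper. The paper never rewrites $f^*(a,g)=f(g)\land(f(a)\to e_S)$; it works with this expression directly, so that even the $\lor$ case requires an absorption trick (adding and then removing a $\neg f(a')$ term), and the $\to$ case is handled by a chain of somewhat ad~hoc rewritings culminating in the identity $f(g'\to g)=f(g')\land(e\to f(g)\land(f(a)\to e))$. Your key move is to exploit immediately that $f$ lands in $\clos(S)$ and that $a\lor g=1_B$, yielding the normal form $h(a,g)=\neg f(a)\lor(f(g)\land e_S)$. This makes $\lor$ a one-line distributive expansion, reduces $\land$ to a clean absorption of two cross-terms via $\neg f(a)\le f(g)$, and for $\to$ lets you compute both sides to closed-form expressions $P\land(Q\lor e_S)$ and $P\land(Q\lor(f(g)\land e_S))$ whose discrepancy is dispatched by $\neg f(g')\le f(a')\le Q$. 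The trade-off: the paper's proof stays entirely inside Heyting-algebra manipulations of the defining formula, while yours front-loads the structural facts about $\clos(S)$ and then proceeds essentially Boolean-style; your route is shorter and more transparent, at the cost of relying on a preliminary simplification step. One minor slip: in the $\land$ case you cite ``the Stone De Morgan identity $\neg(x\land y)=\neg x\lor\neg y$'' but what you actually need there is the Heyting identity $\neg(x\lor y)=\neg x\land\neg y$ (or simply that $f(a),f(a')\in\clos(S)$, which is Boolean); the Stone law is genuinely used only later, in the $\to$ case.
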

\begin{proof}
Let $f^* = \alpha_{B, S} f$ and $c,c' \in \calg(B)$, where $c = (a,g)$ and $c' = (a', g')$.
\begin{itemize}
\item 
$f^* (0) = f (0_B) \land (f (1_B) \to e_S) = 0_S$.
\item 
$f^* (1) = f (1_B) \land (f (0_B) \to e_S) = 1_S$.
\item 
$f^* (e) = f (1_B) \land (f (1_B) \to e_S) = e_S$.
\item 
$
\begin{aligned}[t]
f^* & \left(c \land c'\right) =
f^* (a \lor a', g \land g') =
f(g \land g') \land (f(a \lor a') \to e) \\ &=
f(g) \land f(g') \land (f(a) \to e) \land (f(a') \to e) =
f^*c \land f^*c'.
\end{aligned}
$

\item
$
\begin{aligned}[t]
f^* & \left(c \lor c'\right) =
f^* (a \land a', g \lor g') =
f(g \lor g') \land (f(a \land a') \to e) \\ &=
\left( f(g) \land (f(a) \land f(a') \to e) \right) \lor \left( f(g') \land (f(a) \land f(a') \to e) \right).
\end{aligned}
$\\
Since $a' \lor g' = 1_B$, we have $\neg a' \le g'$. Thus,
\begin{align*}
f^*  \left(c \lor c'\right) =&
\left( f(g) \land (f(a) \land f(a') \to e) \right) \lor \\ &\left( f(g') \land (f(a) \land f(a') \to e) \right) \lor 
\left( \neg f(a') \land (f(a) \land f(a') \to e) \right) \\ =&
\left( f(g) \land (f(a) \land f(a') \to e) \right) \lor \left( f(g') \land (f(a) \land f(a') \to e) \right) \\ & \lor  \neg f(a').
\end{align*}
The last equality follows from the fact that $\neg f(a') \le f(a) \land f(a') \to e$.
We conclude that
\begin{equation}\label{kqhxg}
f^*  \left(c \lor c'\right) =
\left( f(g) \land (f(a) \to e) \right) \lor \left( f(g') \land (f(a) \land f(a') \to e) \right). 
\end{equation}
By applying an analogous procedure, we obtain
\begin{equation*}
f^*  \left(c \lor c'\right) =
\left( f(g) \land (f(a) \land f(a') \to e) \right) \lor \left( f(g') \land (f(a') \to e) \right). 
\end{equation*}
Conjoining this expression with \eqref{kqhxg} yields
\begin{align*}
f^*  \left(c \lor c'\right) &=
\left( f(g) \land (f(a) \to e) \right) \lor \left( f(g') \land (f(a') \to e) \right) \\ &=
f^*c \lor f^*c'.
\end{align*}
\item
$
\begin{aligned}[t]
f^* & \left(c' \to c\right) \\ &=
f(g' \to g) \land \left( f\left( (a \land \neg a') \lor \neg (g' \to g) \right) \to e \right) \\ &=
f(g' \to g) \land \left( \neg f (a \land \neg a') \land f (g' \to g) \lor e \right) \\ &=
f(g' \to g) \land \left( f (a \land \neg a') \to e \right)
\\ &=
f(g' \to g) \land f(\neg a' \to g) \land \left( f (a \land \neg a') \to e \right)
\end{aligned}$

\noindent
The last equality follows from the fact that $\neg a' \le g'$. We have
\begin{align*}
f^* & \left(c' \to c\right) \\
 &=
(f(g') \land e \to f(g) \land (f(a) \to e)) \land (\neg f(a') \to f(g) \land (f(a) \to e)),
\intertext{which holds because $f(g' \to g) = f(g') \land (e \to f(g) \land (f(a) \to e))$. We finally obtain}
f^* & \left(c' \to c\right) 
=
(\neg f(a') \lor f(g') \land e ) \to (f(g) \land (f(a) \to e)) \\ &=
f(g') \land (f(a') \to e ) \to (f(g) \land (f(a) \to e)) \\ &=
f^* c' \to f^*c.\qedhere
\end{align*}
\end{itemize}
\end{proof}

\begin{proposition}\label{qkghskg}
Let $f^* \in \hom_{\agheyc} (\calg(B), S)$. The assignment
$$
\beta_{B, S}\colon f^* \mapsto f^* \Delta
$$
is a set morphism $\beta_{B, S} \colon \hom_{\agheyc} (\calg(B), S) \to \hom_{\boolc} (B, \clos(S))$.
Moreover,
$\alpha_{B, S}$ and $\beta_{B,S}$ are inverses.
\end{proposition}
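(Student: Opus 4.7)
For well-definedness of $\beta_{B, S}$, I would first observe that $\Delta$ lifts to a Boolean algebra morphism $B \to \clos(\calg(B))$. Using the formula for $\to$ in $\calg(B)$, one computes $\neg \Delta x = (x, \neg x)$, hence $\clos(\Delta x) = \neg \neg \Delta x = \Delta x$, so $\Delta x$ is always closed; preservation of $\land$, $\lor$, $0$, $1$ is a one-line verification each. Since any $f^* \in \hom_{\agheyc}(\calg(B), S)$ preserves $\to$, $0$, and $e$, it preserves $\neg$ and closure, so it restricts to a Boolean morphism $\clos(\calg(B)) \to \clos(S)$. Therefore $\beta_{B, S}(f^*) = f^* \circ \Delta$ is a Boolean morphism $B \to \clos(S)$, as required.

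For $\alpha_{B, S} \circ \beta_{B, S} = \id$, the key step is the decomposition
\[
(a, g) = \Delta g \land \left(\Delta a \to e_{\calg(B)}\right) \quad \text{in } \calg(B).
\]
I would verify this by direct computation: the explicit formula for $\to$ gives $\Delta a \to e_{\calg(B)} = (\neg a, a) \to (1_B, 1_B) = (a, 1_B)$, so $\Delta g \land (a, 1_B) = (\neg g \lor a, g)$, and $a \lor g = 1_B$ in $B$ forces $\neg g \le a$, collapsing the right-hand side to $(a, g)$. Applying $f^*$, which preserves $\land$, $\to$, and $e$, then yields $f^*(a, g) = f^*(\Delta g) \land (f^*(\Delta a) \to e_S) = \alpha_{B, S}(\beta_{B, S}(f^*))(a, g)$.

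For $\beta_{B, S} \circ \alpha_{B, S} = \id$, I would take $f \in \hom_{\boolc}(B, \clos(S))$ and $x \in B$ and unfold
\[
(\beta_{B, S} \alpha_{B, S} f)(x) = (\alpha_{B, S} f)(\neg x, x) = f(x) \land \left(f(\neg x) \to e_S\right).
\]
Since $f(\neg x) \in \clos(S)$ is closed, item (iv) of the earlier proposition rewrites $f(\neg x) \to e_S$ as $\neg f(\neg x) \lor e_S$; since $f$ is a Boolean morphism and $f(x)$ is closed, $\neg f(\neg x) = \neg \neg f(x) = f(x)$, and the whole expression collapses to $f(x) \land (f(x) \lor e_S) = f(x)$ by absorption.

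The main obstacle I anticipate is establishing the decomposition $(a, g) = \Delta g \land (\Delta a \to e_{\calg(B)})$ in $\calg(B)$: it is the one place where the specific form of the Heyting implication on contract pairs and the sandwich condition $a \lor g = 1_B$ genuinely enter the argument. Once this identity is in hand, both triangle identities reduce to pushing the morphism through the Heyting operations and exploiting that $f$ has image in the closed elements.
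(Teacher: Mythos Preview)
Your proposal is correct and follows essentially the same route as the paper: both verify that $f^*\Delta$ is a Boolean morphism into $\clos(S)$, both reduce $\alpha\circ\beta=\id$ to the identity $(a,g)=\Delta g\land(\Delta a\to e)$ in $\calg(B)$ (the paper does this computation inline after applying $f^*$, you state it first in $\calg(B)$), and both collapse $\beta\circ\alpha=\id$ via item~(iv) and absorption. The only differences are presentational---you factor the well-definedness through $\Delta\colon B\to\clos(\calg(B))$ and the restriction of $f^*$ to closed elements, whereas the paper checks the composite directly---but the underlying computations coincide.
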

\begin{proof}
Let $b, b' \in B$ and set $f = \beta_{B, S} f^*$.
We have $f(0_B) = f^* \Delta 0_B = f^* 0 = 0$. Now we study the Boolean operations:
$f (b \land b') = f^* (\neg (b\land b'), b\land b') = f^* b \land f^* b' = fb \land fb'$.
$f (b \lor b') = f^* (\neg (b\lor b'), b\lor b') = f^* b \lor f^* b' = fb \lor fb'$.
$f (\neg b) = f^* (\neg(b \to 0_B),b \to 0_B) = f^* ((\neg b, b)\to 0) = f b \to 0 = \neg f b$.
Regarding the second part, let $(a, g) \in \calg(B)$. We have
$
(\alpha_{B, S} \beta_{B,S} f^*) (a,g)= 
(\alpha_{B, S} f^* \Delta) (a,g)= 
(f^* \Delta \pi_2 \land (f^* \Delta \pi_1 \to e_S)) (a,g) =
f^* (\neg g, g) \land (f^* (\neg a, a) \to e_S) =
f^* \left((\neg g, g) \land ( (\neg a, a) \to e)\right) =
f^* \left((\neg g, g) \land ( (a, 1) )\right) =
f^* (a, g)
$
and
$
(\beta_{B,S} \alpha_{B, S} f) (b) =
(f\pi_2 \Delta \land (f\pi_1 \Delta \to e_S)) (b) =
f(b) \land (f(\neg b) \to e_S) =
f(b) \land (f(b) \lor e_S) =
f(b)
$.
\end{proof}

\begin{corollary}
$\calg$ is fully faithful.
\end{corollary}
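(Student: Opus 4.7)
The plan is to invoke the standard categorical fact that a left adjoint is fully faithful exactly when the unit of the adjunction is an isomorphism. Propositions \ref{kqhgs} and \ref{qkghskg} establish the adjunction $\calg \dashv \clos$, and setting $S = \calg(B)$ and evaluating $\beta_{B,\calg(B)}$ at $\id_{\calg(B)}$ identifies the unit as $\eta_B = \Delta \colon B \to \clos(\calg(B))$, $b \mapsto (\neg b, b)$. It therefore suffices to show that $\Delta$ is a Boolean isomorphism for every $B \in \obj(\boolc)$; the compatibility needed to promote this into fullness/faithfulness of $\calg$ is the identity $\beta_{B,\calg(B')}(\calg(f)) = \Delta \circ f$, which is immediate from $\calg(f)(\neg b, b) = (\neg fb, fb)$.

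The key computation is characterizing the closed elements of $\calg(B)$. Starting from the explicit Heyting implication in $\calg(B)$ and using that $a \lor g = 1_B$ implies $\neg a \le g$, a short calculation gives $\neg(a,g) = (g, \neg g)$, and hence $\clos(a,g) = \neg\neg(a,g) = (\neg g, g) = \Delta g$. Consequently the image of $\Delta$ is exactly $\clos(\calg(B))$, so $\Delta$ is surjective; it is injective because $\pi_2 \circ \Delta = \id_B$; and it is a Boolean morphism since it coincides with $\beta_{B,\calg(B)}(\id_{\calg(B)}) \in \hom_{\boolc}(B, \clos(\calg(B)))$ by Proposition \ref{qkghskg}. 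Thus $\Delta$ is a Boolean isomorphism, and $\calg$ is fully faithful.

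The only genuine work is the computation of $\clos(a,g)$; everything else is a matter of unwinding the adjunction already constructed. A reader wishing to avoid the unit-is-iso criterion can argue directly: for faithfulness, the identity $\calg(f)(\Delta b) = \Delta(fb)$ combined with $\pi_2 \circ \Delta = \id_B$ recovers $f$ from $\calg(f)$; for fullness, given $\phi \in \hom_{\agheyc}(\calg(B), \calg(B'))$, one sets $f = \pi_2 \circ \phi \circ \Delta$ and verifies $\calg(f) = \phi$ via the identity $(a,g) = \Delta g \land (\Delta a \to e)$ in $\calg(B)$, which again rests on the same closure computation.
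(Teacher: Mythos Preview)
Your proof is correct and follows essentially the same route as the paper: both arguments rest on showing that $\Delta \colon B \to \clos(\calg(B))$ is a Boolean isomorphism (with inverse $\pi_2$) and then invoking the hom-set bijection of Proposition~\ref{qkghskg}. You package this via the unit-is-iso criterion and supply the explicit computation $\clos(a,g)=(\neg g,g)$ together with the compatibility $\beta_{B,\calg(B')}(\calg(f))=\Delta\circ f$, whereas the paper simply asserts the isomorphism $\Delta\leftrightarrow\pi_2$ and writes out the resulting chain of hom-set bijections.
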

\begin{proof}
Let $B$ be a Boolean algebra. 
The maps
$\Delta\colon B \to \clos (\calg (B))$ and 
$\pi_2 \colon \clos (\calg (B)) \to B$ form an isomorphism in $\boolc$. Therefore,
$$
\hom_{\boolc} (B, B) \cong
\hom_{\boolc} (B, \clos(\calg(B))) \cong 
\hom_{\agheyc} (\calg(B), \calg(B)),
$$
where the second isomorphism follows from Proposition \ref{qkghskg}.
\end{proof}

\begin{theorem}
$\calg$ is the left adjoint of $\clos$.
\end{theorem}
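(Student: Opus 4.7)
The plan is to promote the family of bijections $\alpha_{B,S}\colon \hom_{\boolc}(B, \clos(S)) \to \hom_{\agheyc}(\calg(B), S)$ from Propositions \ref{kqhgs} and \ref{qkghskg} to a natural isomorphism in both arguments. Since Proposition \ref{qkghskg} already supplies $\alpha_{B, S}$ as a bijection with explicit inverse $\beta_{B, S}$, the only remaining task is to check naturality in $B$ and in $S$; once this is done, the adjunction $\calg \dashv \clos$ follows by definition.

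For naturality in $B$, I would fix $h \in \hom_{\boolc}(B', B)$ and $f \in \hom_{\boolc}(B, \clos(S))$ and verify the equation
$$\alpha_{B', S}(f \circ h) = \alpha_{B, S}(f) \circ \calg(h).$$
Evaluating both sides on an arbitrary $(a, g) \in \calg(B')$ reduces, via the definition $\calg(h)(a, g) = (h(a), h(g))$, to the common expression $f(h(g)) \land (f(h(a)) \to e_S)$. This step is mechanical.

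For naturality in $S$, I would fix $k \in \hom_{\agheyc}(S, S')$ and $f \in \hom_{\boolc}(B, \clos(S))$ and verify
$$\alpha_{B, S'}(\clos(k) \circ f) = k \circ \alpha_{B, S}(f),$$
again by evaluating on an arbitrary $(a,g) \in \calg(B)$. Both sides reduce to $k(f(g)) \land (k(f(a)) \to e_{S'})$, provided $k$ preserves meets, the Heyting implication, and sends $e_S$ to $e_{S'}$.

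The main point to confirm is that every morphism $k$ of $\agheyc$ satisfies $k(e_S) = e_{S'}$. This is part of the definition, since $e$ is a distinguished constant in the signature $(\land,\lor,\to,1,0,e)$ of augmented Stone algebras; alternatively, assuming only that $k$ preserves the Heyting structure, one has $k(e_S) \to k(x) = k(e_S \to x) = k(\clos(x)) = \clos(k(x))$ for every $x \in S$ (using that $k$ commutes with $\neg$), so $k(e_S)$ is a closure element of $S'$, and by uniqueness (part (ii) of the earlier proposition) $k(e_S) = e_{S'}$. With this established, both naturality squares commute and the adjunction follows.
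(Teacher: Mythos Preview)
Your proposal is correct and follows the same route as the paper: invoke the bijection from Propositions \ref{kqhgs} and \ref{qkghskg} and then verify naturality (the paper checks both variables simultaneously and also writes out the $\beta$ square, but your separate checks of $\alpha$ alone suffice). One minor caveat: your alternative argument for $k(e_S)=e_{S'}$ only establishes $k(e_S)\to k(x)=\clos(k(x))$ for $x$ in the image of $k$, not for arbitrary $y\in S'$, so uniqueness of the closure element does not directly apply---your primary argument via the signature is the one that actually carries the proof.
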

\begin{proof}
Given $B \in \obj(\boolc)$ and $S \in \obj(\agheyc)$, Propositions \ref{kqhgs} and \ref{qkghskg} show that
$\alpha_{B, S} \colon \hom_{\boolc} (B, \clos(S)) \to \hom_{\agheyc} (\calg(B), S)$
and
$\beta_{B, S} \colon \hom_{\agheyc} (\calg(B), S) \to \hom_{\boolc} (B, \clos(S))$
are inverses. It remains to show this isomorphism is natural in both arguments.
Let $\rho \in \hom_{\boolc}(B', B)$ and $\sigma \in \hom_{\agheyc}(S, S')$. We want to show that the following diagram commutes:
\begin{equation*}
\begin{tikzcd}
\hom_{\boolc} (B, \clos(S)) 
\arrow[r, shift left, "\alpha_{B, S}"]
\arrow[r, shift right, "\beta_{B, S}"',leftarrow]
\arrow[d, "\sigma \circ () \circ \rho"]
&
\hom_{\agheyc} (\calg(B), S)
\arrow[d, "\sigma \circ () \circ \calg(\rho)"]
\\
\hom_{\boolc} (B', \clos(S')) 
\arrow[r, shift left, "\alpha_{B', S'}"]
\arrow[r, shift right, "\beta_{B', S'}"',leftarrow]
&
\hom_{\agheyc} (\calg(B'), S')
\end{tikzcd}
\end{equation*}
Let $f \in \hom_{\boolc}(B, \clos(S))$ and $(a',g') \in \calg(B')$. We have
\begin{align*}
\left( \alpha_{B',S'} (\sigma f \rho) \right) (a',g') &=
(\sigma f \rho \pi_2 \land (\sigma f \rho \pi_1 \to e_{S'}))(a',g') \\ &=
\sigma f \rho (g') \land (\sigma f \rho (a') \to e_{S'}) \\ &=
\sigma \left( f \rho (g') \land (f \rho (a') \to e_{S}) \right) \\ &=
\sigma \left( f \pi_2 \land (f \pi_1 \to e_{S}) \right) (\rho (a'), \rho (g')) \\ &=
\sigma \alpha_{B,S}(f) \calg(\rho) (a',g').
\end{align*}
Let $f^* \in \hom_{\agheyc}(\calg(B), S)$ and $b' \in B'$. We have
\begin{align*}
\beta_{B',S'} \left( \sigma f^* \calg(\rho) \right) (b') &=
\sigma f^* \calg(\rho) \Delta (b') \\ &=
\sigma f^* (\neg \rho b',\rho b') \\ &=
\sigma f^* \Delta (\rho b') \\ &=
(\sigma \beta_{B,S} (f^*) \Delta \rho) (b').\qedhere
\end{align*}
\end{proof}

\subsection*{Funding details}

This work was supported by NSF and ASEE through an eFellows postdoctoral fellowship.






\bibliographystyle{acm}
\bibliography{references}

\begin{thebibliography}{1}

\bibitem{BenvenisteContractBook}
{\sc Benveniste, A., Caillaud, B., Nickovic, D., Passerone, R., Raclet, J.-B.,
  Reinkemeier, P., Sangiovanni-Vincentelli, A., Damm, W., Henzinger, T.~A., and
  Larsen, K.~G.}
\newblock Contracts for system design.
\newblock {\em Foundations and
  Trends$^{\text{\scriptsize{\textregistered}}}$\hspace{-.3em} in Electronic
  Design Automation 12}, 2-3 (2018), 124--400.

\bibitem{Incer:EECS-2022-99}
{\sc Incer, I.}
\newblock {\em The Algebra of Contracts}.
\newblock PhD thesis, EECS Department, University of California, Berkeley, May
  2022.

\end{thebibliography}

\end{document}